\newcommand{\be}{\begin{eqnarray}}
\newcommand{\ee}{\end{eqnarray}}
\newcommand{\bse}{\begin{subequations}}
\newcommand{\ese}{\end{subequations}}
\newcommand{\bdm}{\begin{displaymath}}
\newcommand{\edm}{\end{displaymath}}
\numberwithin{equation}{subsection}
\theoremstyle{plain}
\theoremstyle{plain}
\newtheorem{pro}{Proposition}[subsection]
\theoremstyle{definition}
\newtheorem{defn}{Definition}[subsection]
\theoremstyle{remark}
\newtheorem*{ack}{Acknowledgement}
\newtheorem*{note}{Note added}
\newtheorem*{conv}{Conventions adopted}
\newcommand{\Ksi}{\boldsymbol{\xi}}
\newcommand{\Ze}{\boldsymbol{\zeta}}
\newcommand{\X}{\mbox{\boldmath $X$}}
\newcommand{\Si}{\boldsymbol{\sigma}}
\newcommand{\e}{\mbox{\boldmath $e$}}
\newcommand{\Th}{\boldsymbol{\theta}}
\newcommand{\Al}{\boldsymbol{\alpha}}
\newcommand{\Be}{\boldsymbol{\beta}}
\newcommand{\Ga}{\boldsymbol{\Gamma}}
\newcommand{\Om}{\boldsymbol{\Omega}}
\begin{document}
\title{\Large\textbf{Locally Homogeneous Spaces, Induced Killing Vector Fields and Applications to Bianchi Prototypes}}
\vspace{1cm}
\author{\textbf{G.O. Papadopoulos}\thanks{e-mail: gopapado@phys.uoa.gr}\\
\textit{National \& Kapodistrian University of Athens, Physics Department}\\
\textit{Nuclear \& Particle Physics Section}\\
\textit{Panepistimioupolis, Ilisia GR 157--71, Athens, Greece}
\vspace{0.5cm}\\
\textbf{Th. Grammenos}\thanks{e-mail: thgramme@civ.uth.gr}\\
\textit{University of Thessaly, Department of Civil Engineering}\\
\textit{GR 383--34, Volos, Greece}\\}
\date{}
\maketitle
\begin{center}
\textit{\textbf{Dedication}}
\textit{G.O. Papadopoulos wishes to dedicate this work to Gerasimos}.
\end{center}

\begin{abstract}

An answer to the question: \emph{Can, in general, the adoption of a given symmetry induce a further symmetry, which might be hidden at a first level?} has been attempted in the context of differential geometry of locally homogeneous spaces. Based on \'{E}. Cartan's theory of moving frames, a methodology for finding all symmetries for any $n$-dimensional locally homogeneous space is provided. The analysis is applied to 3-dimensional spaces, whereby the embedding of them into a 4-dimensional Lorentzian manifold is examined and special solutions to Einstein's field equations are recovered.\\
The analysis is mainly of local character, since the interest is focused on local structures based on differential equations (and their symmetries), rather than on the implications of, e.g., the analytic continuation of their solution(s) and their dynamics in the large.

\vspace{0.3cm}
\noindent
\textbf{MSC-Class} (2010): 83C05, 83C15, 83C20, 53B20, 53C30, 58J70, 22E65, 22E70\\
\textbf{Keywords}: locally homogeneous spaces, Killing equations \& vector fields, (local) isometric embedding, moving frames, gauge freedom
\end{abstract}


\section{Introduction}

A long time ago, a not well known discovery was made (for details, see: \cite{Daskalos}): in the Bianchi Type III cosmological prototype, a fourth Killing vector field of the 3-dimensional locally homogeneous space has been discovered and the adoption of that, through proper prolongation, has led to a special  (of the Kinnersley type, see \cite{Kinnersley}) solution to the Einstein's field equations (EFEs). Prompted by this, a question has arisen: \emph{Can, in general, the adoption of a given symmetry induce a further symmetry, which might be hidden at a first level?}\\
At least to the best of the authors' knowledge, the only work discussing the homogeneous case in 3 dimensions is \cite{Szafron}; but the treatment is not systematic (i.e., no general method, applicable to $n$ dimensions, is presented), and from a very different point of view\footnote{It is deemed appropriate to quote the abstract of reference \cite{Szafron}: \textit{We derive necessary and sufficient tensor conditions for the existence of a four parameter isometry group $G_{4}$ which acts multiply transitively on a Riemannian $V_{3}$. We then apply these results to determine which spatially homogeneous cosmological models have induced 3-metrics which are invariant under such a four parameter group}.}. Also, another similar work is to be found in \cite{Milnor}, but the topological issues put severe limitations on the results.

The idea of symmetry, as realised through the context and the applications of group theory, has been proven to be extremely fruitful in many aspects and in many fields of research. From the realm of differential equations to the areas of geometry and topology, and from simple (or even complex) technical problems to modern physical theories, symmetry has been a most significant component in the effort towards understanding the nature of versatile problems and, consequently, providing a solving process --to some extent, at least.

Of course, although the interest in the present parer is focused on both the Riemannian geometry of locally homogeneous spaces (LHS), from a pragmatic point of view, and the EFEs within the context of the former, the following short thoughts seem to be quite general.\\
In a very broad sense, one could say that there are two cases where the idea of symmetry is implemented:
\begin{itemize}
\item[$C_{1}$] In the first case, symmetry can be considered as an exact assumption which can lead to a simplified, compared to the initial, problem with the hope of an equally simple  
                          and exact, as a statement, solution. A typical example for this case is the adoption of some symmetry (as an accurate assumption) upon solving the EFEs. Indeed, the existence 
                          of some Killing vector field(s) (KVF(s)) leads (in many instances) to analytic, closed in form, solutions to the EFEs --see \cite{Stephani_et_al.} for a panoramic view on this 
                          issue.
\item[$C_{2}$] In the second case, symmetry is a (usually ``\mbox{}hidden'') feature characterising a given system the implementation of which can clarify as well as simplify the 
                          problem to be solved. Here, the archetype\footnote{It is well known that Sophus Lie gave to the notion of continuous symmetry a rigorous meaning by inventing the theory of continuous groups 
                          of transformations and applying his ideas to the theory of differential equations in order to generalise Evarist Galois' theory on algebraic equations. Hence, the word archetype is the most 
                          appropriate.} is the discovery (using various standard methods) of, e.g., some Lie-point symmetries admitted by a system of differential 
                          equations; their existence signals a reduction (again, using standard methods) of the initial system to a simpler final system. At a conceptual level, one could say that the 
                          initial differential equations are given modulo some kind of redundancy, and that symmetry ---through the reduction process---  results in an irreducible, yet 
                          equivalent, system of differential equations, unveiling ---at the same time--- the true degrees of freedom.
\end{itemize}
A synthesis of these two cases (perhaps, best imagined through the merging of their corresponding examples --which are not chosen arbitrarily) can serve as the thread leading to a third case, worth to be studied per se. Indeed, a reasonable question will be \emph{whether the adoption of a given symmetry, \underline{and no other}, might lead to the discovery of another, extra symmetry}.  

Returning to the thematics of interest, which is the Riemannian geometry of the LHS in $n$ dimensions, the aforementioned considerations can be restated as follows\footnote{The following discussion is not rigorous; it rather offers a flavour, on general grounds, of what is going to be the central scope of this paper. Therefore, a naive approach in both the formalism and definitions/terminology  is adopted.}:\\
Generally, a (pseudo)Riemannian geometry $(\mathcal{M}, \mathbf{g})$, where $\mathcal{M}$ is a smooth $n$-dimensional manifold, and $\mathbf{g}$ a smooth metric tensor field, defined throughout $\mathcal{M}$, defines a LHS in $n$ dimensions if the Lie algebra of KVFs spans the tangent space at all points of  $\mathcal{M}$. Then, locally, the space is completely described by the structure constants of the underlying Lie algebra. These constants satisfy algebraic constraints coming from the Jacobi identity.
Of course, the KVFs satisfy the conditions:
\be\label{KillingEquations}
\pounds_{\Ksi_{A}}\mathbf{g}=0
\ee

Now, it is easy to adapt the previous general comments on symmetries. 
In principle, there are two ways to consider a symmetry expression like $\pounds_{\Ze}\mathbf{g}=0$ :
\begin{itemize}
\item[$HC_{1}$] Either as a system of partial differential equations (PDEs) of the 1st order which, given some KVF(s) $\{\Ze\}$ as an initial symmetry assumption, is to be solved in  
                            terms of the metric tensor field components.
\item[$HC_{2}$] Or as a system of PDEs of the 1st order which, given a smooth metric tensor field, is to be solved in terms of the unknown(s) $\{\Ze\}$.
\end{itemize}
One could say that for the case of LHS, and since all the KVFs are given each time (by virtue of the very definition for a locally homogeneous Riemannian geometry), the first possibility describes exactly the state of affairs, while the second is empty (or meaningless). But, on the other hand, the question posed earlier still remains:\\
\emph{Let an $n$-fold $\{\Ksi_{A}\}$ of KVFs, corresponding to a given Lie algebra $\mathfrak{g}_{n}$, be \underline{the only} initial symmetry assumption for a locally homogeneous Riemannian geometry in $n$ dimensions. Let also a metric tensor field $\mathbf{g}_{\text{red.}}$ which solves \eqref{KillingEquations}. Under such a setting, are there other, non trivial solutions to the system $\pounds_{\Ze}\mathbf{g}_{\text{red.}}=0$, in terms of some KVF(s) $\{\Ze\}$ ?}\\
The paradigm of Bianchi Type III (see, e.g., \cite{Daskalos}), surprisingly enough, rendered this question not only non trivial ---because the answer there is affirmative--- but also justified --since the discovery of the extra KVF has led to a special solution to EFEs for that prototype \cite{Daskalos,Kinnersley}).

Another perspective leading to this question is provided by the following general comments:\\
Suppose that one wants to solve a symmetry condition of the form $\pounds_{\Ze}\mathbf{g}=0$ by implementing formal solving processes for systems of PDEs. Then, in principle, two equally formal and associated problems will appear.\\
The first problem, which is twofold in nature, is that (after a formal solving process) redundant degrees of freedom might emerge --as the paradigm of the Schwarzschild space time shows. Indeed, substitution of all the three KVFs (defining the Lie algebra of the group $SO(3,\mathbb{R})$ acting multiply transitively on 2-dimensional subspaces) into the symmetry condition, results in a metric tensor field with one spurious component; then, a general coordinate transformation (GCT) must be implemented in order to absorb this component. The twofold nature of this problem is to be found in the fact that, modulo a few instances, no one knows in advance if and how many spurious degrees might appear; moreover it is not always easy to find the needed GCT(s).\\
The second problem is that a potential redundancy renders the search for further induced symmetries much harder --and such further symmetries might exist, as the example of the Bianchi Type III homogeneous space shows.

However, this is not the end of the story. The consequences of this simple question, posed for any LHS in $n$ dimensions (which is the objective of Section 3), have important implications:
\begin{itemize}
\item[$I_{1}$] One realises that it is necessary to know all the initial symmetries in a given setting, especially when the existence of some of them is a mere consequence of the adoption of the rest.
\item[$I_{2}$] Knowledge of the full symmetry group is tantamount to the knowledge of not only the full gauge freedom but also of the true degrees of freedom --see \cite{Gauge}.
\item[$I_{3}$] Points $I_{1}$ and $I_{2}$ remain valid even when global/topological considerations enter the analysis --see \cite{Global,GlobalandGauge}
\end{itemize}

Section 2 provides some, necessary for the development in Sections 3 and 4, mathematical preliminaries; essentially, elements from \'{E}. Cartan's theory of moving frames are presented.\\
The objective of Section 3 is to answer in detail the previous question (in its last form) for any LHS in $n$ dimensions and to discuss in full the implications of the extra symmetry (when present) along the lines described above.\\
In Section 4 two simple applications are given: (a) the methodology is applied in the special case when $n=3$ (i.e., Bianchi prototypes), and (b) the embedding of the 3-dimensional LHS into a 4-dimensional Lorentzian manifold (Bianchi cosmological models) is considered. Then, the extra symmetry (when present, and by means of proper prolongations) is used to lead to special, vacuum (i.e., Ricci flat) solutions to EFEs, which might be of interest. \\
It should be stressed that the character of the present study is local because the desired result is to communicate the basic ideas in the simplest possible form, and thus to avoid more technical, subtle matters related to topological issues. On the other hand, many results are independent of the topology. Of course, again, the methodology is susceptible of proper modifications in order to remain valid when global considerations are to be taken into account, and this might be the goal of a future work. 


\section{\'{E}. Cartan's theory of moving frames: Elements}

For the sake of both completeness and logical continuity, a short collection of elements (i.e., basic notions, definitions, and results) from \'{E}. Cartan's theory of moving frames will be given. It is meant, of course, that the purpose of this short presentation is not to give a detailed account on the theory; it rather serves as a reference collection. For extended and versatile treatments on the subject, see e.g., the last two from references \cite{ContinuousGroupsOfTransoformations}. The original reference is, of course, \cite{Cartan}.

\begin{conv}
Lower case Latin indices are used for any coordinate space in $n$ dimensions, while capital Latin indices for either any (co)tangent space in $n$ dimensions or any $n$ parametric group space. Both classes of indices have as their domain of definition the set $\{1,2,\ldots n\}$. 
\end{conv} 

Let $\mathcal{R}$ be a (semi)Riemannian space described by the pair $(\mathcal{M},\mathbf{g})$, where $\mathcal{M}$ is an $n$ dimensional, simply connected\footnote{The adoption of this assumption is prompted by a potential implementation of \emph{Poincar\'{e}'s Lemma}. Alternatively, this constraint can be replaced by another one, less restrictive, by considering simply connected neighbourhoods of a given point on the manifold instead.}, Hausdorff and $C^{\infty}$ manifold and $\mathbf{g}$ is a $C^{m}$ metric tensor field on it that is a non degenerate, covariant tensor field of order 2, with the property that at each point of $\mathcal{M}$ one can choose a frame of $n$ real vectors $\{\e_{1},\ldots,\e_{n}\}$, such that $\mathbf{g}(\e_{A},\e_{B})=\eta_{AB}$ where $\boldsymbol{\eta}$ (called \emph{frame metric}) is a (possibly constant) symmetric matrix with prescribed signature.\\
The totality of the sets $\{\e_{A}\}$ (i.e., the sets for every point on the manifold) determines the $GL(n,\mathbb{R})$ frame bundle over $\mathcal{M}$ and defines the tangent bundle $T(\mathcal{M})$ of $\mathcal{M}$. Thus, the matrix $\boldsymbol{\eta}$ simply reflects the inner products of the vectors in the tangent bundle.\\
Another fundamental notion is that of the cotangent bundle $T^{*}(\mathcal{M})$ of $\mathcal{M}$ which, as a linear vector space, is the dual to $T(\mathcal{M})$.
Indeed, if $\{\Th^{A}\}$ denotes the basis of the cotangent space at a point on the manifold, then in a similar manner, the totality of the sets $\{\Th^{A}\}$ (i.e., the sets for every point on the manifold) determines the $GL(n,\mathbb{R})$ coframe bundle over $\mathcal{M}$ and defines the cotangent bundle $T^{*}(\mathcal{M})$ of $\mathcal{M}$.\\
 The duality relation is realised through a linear operation called \emph{contraction} ($\lrcorner$):
\be
\e_{A}\lrcorner\Th^{B}=\delta_{A}^{\phantom{1}B}
\ee
where $\delta_{A}^{\phantom{1}B}$ is the Kronecker delta.

\vspace{0.3cm}
\'{E}lie Cartan has given a formulation of Riemannian geometry in terms of some basic $p$-forms (which are totally antisymmetric $\binom{0}{p}$ tensors; by definition, the coframe vectors are 1-forms) and the four basic operations acting upon them: the \emph{wedge product} ($\wedge$), the \emph{exterior differentiation} ($d$), the contraction ($\lrcorner$) with a frame vector (field) $\e$ and the Lie derivative ($\pounds_{\e}$) with respect to a frame vector (field) $\e$.\\
Some very important properties are (for any $p$-form $\Al$, any $q$-form $\Be$, and any frame vector(s) $\e$):
\bse
\begin{align}
& \e\lrcorner(\Al\wedge\Be)=(\e\lrcorner\Al)\wedge\Be+(-1)^{p}\Al\wedge(\e\lrcorner\Be)\\
& d(\Al\wedge\Be)=(d\Al)\wedge\Be+(-1)^{p}\Al\wedge(d\Be)\\
& \pounds_{\e}d\Al=d\pounds_{\e}\Al\\
& \pounds_{\e}\Al=\e\lrcorner d\Al+d(\e\lrcorner\Al)\\
& \pounds_{\e}(\Al\wedge\Be)=(\pounds_{\e}\Al)\wedge\Be+\Al\wedge(\pounds_{\e}\Be)\\
& \pounds_{\e_{1}}(\e_{2}\lrcorner\Al)=(\pounds_{\e_{1}}\e_{2})\lrcorner\Al+\e_{2}\lrcorner(\pounds_{\e_{1}}\Al)
\end{align}
\ese

\noindent
Returning to the description of Riemannian geometry, the  necessary equations are provided by \emph{\'{E}. Cartan's structure equations} (CSEs):
\begin{itemize}
\item[(A)] The first set, when the \emph{torsion} vanishes, reads:
\bse
\be
d\Th^{A}+\Ga^{A}_{\phantom{1}B}\wedge\Th^{B}=0
\ee
where the $\Ga^{A}_{\phantom{1}B}$ define the \emph{connexion} 1-forms:
\be
\Ga^{A}_{\phantom{1}B}=\gamma^{A}_{\phantom{1}BM}\Th^{M}
\ee
\ese
and the quantities $\gamma^{A}_{\phantom{1}BM}$ are called \emph{Ricci rotation coefficients} (or \emph{spin connexion coefficients} within the context of Newman-Penrose formalism).
\item[(B)] The second set reads:
\bse
\be
\Om^{A}_{\phantom{1}B}=d \Ga^{A}_{\phantom{1}B}+\Ga^{A}_{\phantom{1}M}\wedge\Ga^{M}_{\phantom{1}B}
\ee
where the $\Om^{A}_{\phantom{1}B}$ define the \emph{curvature} 2-forms:
\be
\Om^{A}_{\phantom{1}B}=\frac{1}{2}R^{A}_{\phantom{1}BMN}\Th^{M}\wedge\Th^{N}
\ee
\ese
and the quantities $R^{A}_{\phantom{1}BMN}$ define the Riemann tensor.
\item[(C)] Taking into account the \emph{exterior differential calculus} (EDC), the action of a $d$ upon the first set results in the third set:
\bse
\be
(d\Ga^{A}_{\phantom{1}B}+\Ga^{A}_{\phantom{1}N}\wedge\Ga^{N}_{\phantom{1}B})\wedge\Th^{B}=0
\ee
or, by virtue of the second set:
\be
R^{A}_{\phantom{1}BMN}\Th^{B}\wedge\Th^{M}\wedge\Th^{N}=0
\ee
\ese
which are nothing but the \emph{Jacobi identities}\footnote{Or \emph{algebraic Bianchi identities}, or \emph{cyclic identities}.}.
\item[(D)] Taking into account the EDC, the action of a $d$ upon the second set results in, by virtue of the second set itself, the fourth set:
\be
d\Om^{A}_{\phantom{1}B}=\Om^{A}_{\phantom{1}M}\wedge\Ga^{M}_{\phantom{1}B}-\Ga^{A}_{\phantom{1}M}\wedge\Om^{M}_{\phantom{1}B}
\ee
which are nothing but the \emph{Bianchi identities}.
\item[(E)] The frame metric $\boldsymbol{\eta}$, which is used to raise and lower (co)frame indices, and is subject to the \emph{metricity condition}:
\be
d\eta_{AB}=\Ga_{AB}+\Ga_{BA}
\ee
It is also used to define the \emph{first fundamental form}, $g$:
\be
g=\eta_{AB}\Th^{A}\otimes\Th^{B}
\ee
\end{itemize}
At this point, a comment is deemed necessary. If:
\bse\label{ForObservation1}
\be
\pounds_{\e_{A}}\e_{B}\equiv[\e_{A},\e_{B}]=D^{M}_{\phantom{1}AB}\e_{M}
\ee
where the quantities $D^{A}_{\phantom{1}BM}$ are called \emph{structure functions}, then, by using the basic properties given a few lines back, it can easily be proven that:
\be
d\Th^{A}=-\frac{1}{2}D^{A}_{\phantom{1}MN}\Th^{M}\wedge\Th^{N}=0
\ee
\ese
i.e., $\gamma^{A}_{\phantom{1}[BM]}\equiv\frac{1}{2}(\gamma^{A}_{\phantom{1}BM}-\gamma^{A}_{\phantom{1}MB}) =-\frac{1}{2}D^{A}_{\phantom{1}BM}$

\vspace{0.3cm}
This formalism provides a very elegant, powerful, and simple formulation not only for the isometry condition but also for its integrability conditions. In fact, this formulation becomes even simpler when the frame metric is constant on its entire domain of definition (\emph{rigid frame metric}). Indeed, following \cite{Chinea}, the isometry condition:
\be
\pounds_{\Ze}\mathbf{g}=0
\ee
is completely equivalent to the statement:
\be\label{LieDraggingFForm}
\pounds_{\Ze}(\eta_{AB}\Th^{A}\otimes\Th^{B})=\eta_{AB}\big((\pounds_{\Ze}\Th^{A})\otimes\Th^{B}+\Th^{A}\otimes(\pounds_{\Ze}\Th^{B})\big)=0
\ee
since $d\eta_{AB}=0$; i.e., the Lie dragging, with respect to a KVF $\Ze$, of the first fundamental form vanishes. Because the set of coframe vectors $\{\Th^{A}\}$ constitutes a basis in the cotangent space, it follows that:
\be
\pounds_{\Ze}\Th^{A}=F^{A}_{\phantom{1}B}\Th^{B},\quad\text{in general:}\quad dF^{A}_{\phantom{1}B}\neq 0
\ee
for some non constant quantities $F^{A}_{\phantom{1}B}$; essentialy these are related to the KVF $\Ze$, and thus are to be determined . Upon substitution of this allocation to \eqref{LieDraggingFForm}, it is:
\be\label{EquivalentIsometry}
\pounds_{\Ze}\mathbf{g}=0\Leftrightarrow
\left\{\begin{aligned}
& \pounds_{\Ze}\Th^{A}=F^{A}_{\phantom{1}B}\Th^{B}\\
& \eta_{AM}F^{M}_{\phantom{1}B}+\eta_{MB}F^{M}_{\phantom{1}A}=0
\end{aligned}
\right.
\ee
Once again, taking into account the EDC and the CSEs, the action of a $d$ upon \eqref{EquivalentIsometry} results in the primary integrability condition:
\be\label{FistICforIsometry}
\pounds_{\Ze}\Ga^{A}_{\phantom{1}B}=F^{A}_{\phantom{1}M}\Ga^{M}_{\phantom{1}B}-\Ga^{A}_{\phantom{1}M}F^{M}_{\phantom{1}B}-dF^{A}_{\phantom{1}B}
\ee
In a similar manner, taking into account the EDC and the CSEs, the action of a $d$ upon \eqref{FistICforIsometry} results in the secondary integrability condition:
\be\label{SecondICforIsometry}
\pounds_{\Ze}\Om^{A}_{\phantom{1}B}=F^{A}_{\phantom{1}M}\Om^{M}_{\phantom{1}B}-\Om^{A}_{\phantom{1}M}F^{M}_{\phantom{1}B}
\ee
The integrability conditions of \eqref{SecondICforIsometry} are empty, by virtue of the Bianchi identities.\\
In summary:
\be
\left.\begin{aligned}
& \pounds_{\Ze}\mathbf{g}=0\\
& \text{plus}\\
& \text{Integrability}\\
& \text{Conditions}
\end{aligned}\right\}\Leftrightarrow 
\left\{ 
\begin{aligned}
& g=\eta_{AB}\Th^{A}\otimes\Th^{B}\\
& d\eta_{AB}=0\\
& \pounds_{\Ze}\Th^{A}=F^{A}_{\phantom{1}B}\Th^{B},\quad\text{in general:}\quad dF^{A}_{\phantom{1}B}\neq 0\\
& \eta_{AM}F^{M}_{\phantom{1}B}+\eta_{MB}F^{M}_{\phantom{1}A}=0\\
& \pounds_{\Ze}\Ga^{A}_{\phantom{1}B}=F^{A}_{\phantom{1}M}\Ga^{M}_{\phantom{1}B}-\Ga^{A}_{\phantom{1}M}F^{M}_{\phantom{1}B}-dF^{A}_{\phantom{1}B}\\
& \pounds_{\Ze}\Om^{A}_{\phantom{1}B}=F^{A}_{\phantom{1}M}\Om^{M}_{\phantom{1}B}-\Om^{A}_{\phantom{1}M}F^{M}_{\phantom{1}B}\\
& \text{plus}\\
& \text{CSEs}
\end{aligned}
\right.
\ee


\section{LHS and induced KVFs: the full symmetry group}

The starting poing is the following:
\begin{defn}
Let a structure $(\mathcal{M},\mathbf{g},G_{r})$ be such that:  
\begin{itemize}
\item[$H_{1}$] $\mathcal{M}$ is an $n$ dimensional, simply connected, Hausdorff and $C^{\infty}$ manifold,
\item[$H_{2}$] $\mathbf{g}$ is a $C^{m}$ metric tensor field, defined on the entire manifold that is a non degenerate, covariant tensor field of order 2, with the property that at each point of 
                         $\mathcal{M}$ one can choose a frame of $n$ real vectors $\{\e_{1},\ldots,\e_{n}\}$, such that $\mathbf{g}(\e_{A},\e_{B})=h_{AB}$ where $h_{AB}$ is a (possibly constant)            
                         symmetric matrix of Euclidean signature,
\item[$H_{3}$] $G_{r}$ is a local Lie group of transformations (associated with a Lie algebra $\mathfrak{g}_{r}$) acting simply transitively on $\mathcal{M}$. Therefore, $n=r$ and a bijective 
                         mapping between the set of group parameters and (at least) some neighbourhoods of points on the manifold can be established.
\end{itemize}
Then this structure defines an $n$ dimensional locally homogeneous space (for rigorous accounts see \cite{ContinuousGroupsOfTransoformations}, and for generalisations like \emph{curvature homogeneous spaces}, see \cite{Homogeneity}).
\end{defn}

First, let $U_{p}$ be an open neighbourhood of a given, albeit arbitrary, point $p \in \mathcal{M}$, such that two conditions are met: (a)  the set of group parameters corresponding to the identity group element is a proper subset of $U_{p}$, and (b) if the Lie algebra $\mathfrak{g}_{n}$, as a (real) linear vector space, is spanned by the set of KVFs $\{\Ksi_{A}\}$, then none of its members has singular points on $U_{p}$.\\
The set $\{\Ksi_{A}\}$ can serve as the frame vectors on $\bigcup_{q \in U_{p}}T_{q}(\mathcal{M})$; consequently, the set comprised of their duals, say, $\{\boldsymbol{\phi}^{A}\}$:
\be\label{FirstDuality}
\Ksi_{A}\lrcorner\boldsymbol{\phi}^{B}=\delta_{A}^{\phantom{1}B}
\ee
can also serve as the coframe vectors on  $\bigcup_{q \in U_{p}}T^{*}_{q}(\mathcal{M})$, and it can be used to describe the local Riemannian geometry along the lines of \'{E}. Cartan's theory. Now, since:
\be\label{LieAlgebra}
\forall\quad\Ksi_{A},\Ksi_{B}\in \mathfrak{g}_{n}: [\Ksi_{A},\Ksi_{B}]=C^{M}_{\phantom{1}AB}\Ksi_{M}
\ee
where $C^{M}_{\phantom{1}AB}$ are the strcuture constants corresponding to the Lie algebra $\mathfrak{g}_{n}$, a Lie differentiation with respect to $\Ksi_{A}$ of the duality relation \eqref{FirstDuality} results in:
\be
\pounds_{\Ksi_{A}}\boldsymbol{\phi}^{B}=-C^{B}_{\phantom{1}AM}\boldsymbol{\phi}^{M},\quad\forall\quad A, B \in \{1,\ldots,n\}
\ee
and thus, a first fundamental form like:
\be
g=H_{AB}\boldsymbol{\phi}^{A}\otimes\boldsymbol{\phi}^{B}
\ee
would not \underline{trivially} solve the symmetry condition:
\be
\pounds_{\Ksi_{A}}\mathbf{g}=0
\ee
but, rather, there would be differential constraints upon the (non constant) $H_{AB}$. Therefore, such an adoption would not offer much towards the search for an irreducible (i.e., without spurious degrees of freedom) metric tensor field and with all its symmetries known, simply because one would face a similar, to the initial, problem: that of solving a system of PDEs for $H_{AB}$.\\
A simplification towards the solution to this problem is provided by the basis of the algebra $\widetilde{\mathfrak{g}}_{n}$ of the reciprocal ---to $G_{n}$--- local Lie group of transformations $\widetilde{G}_{n}$, spanned by, say, $\{\X_{A}\}$ and having the defining property\footnote{See the first of references \cite{ContinuousGroupsOfTransoformations}.}:
\bse
\begin{align}
(\forall\quad\Ksi_{A}\in \mathfrak{g}_{n})\wedge (\forall\quad\X_{B}\in \widetilde{\mathfrak{g}}_{n})& : \pounds_{\Ksi_{A}}\X_{B}\equiv[\Ksi_{A},\X_{B}]=0\label{InvariantFrameA}\\
\forall\quad\X_{A},\X_{B}\in \widetilde{\mathfrak{g}}_{n}& : [\X_{A},\X_{B}]=-C^{M}_{\phantom{1}AB}\X_{M}\label{InvariantFrameB}
\end{align}
\ese
where \eqref{InvariantFrameB} reflect the initial conditions (say $\{\X_{A}|_{q\in U_{p}}=\X^{0}_{A}\}$) needed for a solution to the system of PDEs \eqref{InvariantFrameA}.\\
At this point, an assumption is needed; the domain of definition $S_{X}$, determined by the solution to the system of PDEs \eqref{InvariantFrameA}, is supposed to be such that: (a) no member of the set $\{\X_{A}\}$ has singular points on $S_{X}$, and (b) the set of group parameters corresponding to the identity group element is a proper subset of the set $\big(\bigcup_{q \in U_{p}}T_{q}(\mathcal{M})\big)\bigcap S_{X}\equiv \mathcal{K}$.\\
Under the previous assumption, the set $\{\X_{A}\}$ can now serve as the frame vectors on $\bigcup_{q \in \mathcal{K}}T_{q}(\mathcal{M})$,  while the set comprised of their duals, say, $\{\Si^{A}\}$:
\be\label{Duality}
\X_{A}\lrcorner\Si^{B}=\delta_{A}^{\phantom{1}B}
\ee
which is also characterised by the property:
\be\label{KsiLieDraggingSigma}
\pounds_{\Ksi_{A}}\Si^{B}=0,\quad\forall\quad A, B \in \{1,\ldots,n\}
\ee
following immediately by a Lie differentiation with respect to $\Ksi_{A}$, of \eqref{Duality} and implementation of \eqref{InvariantFrameA}, can serve as the coframe vectors on  $\bigcup_{q \in \mathcal{K}}T^{*}_{q}(\mathcal{M})$. Moreover, given the fact that all the inner products amongst the frame vectors $\{\X_{A}\}$ are, by virtue of \eqref{InvariantFrameA}, constant (or dependent on outer\footnote{Here, \emph{outer} stands for variables irrelevant to $\mathcal{M}$ and without any prejudice regarding their character.} variable(s)), it is deduced that the frame metric is also constant (or dependent on outer variable(s)).\\
Thus, a first fundamental form like:
\be\label{RedFundamentalForm}
g_{\text{red.}}=h_{AB}\Si^{A}\otimes\Si^{B}
\ee
defines a smooth metric tensor field throughout $\bigcup_{q \in \mathcal{K}}T^{*}_{q}(\mathcal{M})$, which \underline{trivially} satisfies, the equivalent to, the isometry condition:
\bse
\begin{align}
\pounds_{\Ksi_{A}}g_{\text{red.}} & =h_{MN}\big((\pounds_{\Ksi_{A}}\Si^{M})\otimes\Si^{N}+\Si^{M}\otimes(\pounds_{\Ksi_{A}}\Si^{N})\big)\stackrel{\eqref{KsiLieDraggingSigma}}{=}0\\
d h_{AB} & =0
\end{align}
\ese
rendering the homogeneity manifest.

Both problems, i.e., that of an irreducible form for the compatible metric tensor field $\mathbf{g}$, and that of the knowledge of the full symmetry group of the latter, have been reduced to the search for an irreducible form for the frame metric $\mathbf{h}$ and its symmetries. 


\subsection{Irreducible forms for the frame metric tensor field in $n$-dimensional LHS}

In the calculus on differentiable manifolds, there are two distinct categories\footnote{Here, the word \emph{categories} stands for a class or division of  things regarded as having particular shared characteristics; a class which might have a structure like that of a group.} of gauge freedom. Both categories can be given the structure of a (local) continuous group  --along, with their disconnected components, corresponding to discrete symmetries\footnote{See: \cite{Stephani_et_al.}, the last two of references \cite{ContinuousGroupsOfTransoformations} and, of course, \cite{Cartan}.}:
\begin{itemize}
\item[$GF_{1}$] The category related to changes in local coordinate systems in a given atlas on the manifold, constituting the \emph{diffeomorphisms group}.
\item[$GF_{2}$] The category related to changes in basis in the tangent space, i.e., the group associated with the (co)tangent bundle. This could also be thought of as being the symmetry group of the         
                           CSEs\footnote{Irrespectively of a torsion and/or a metric tensor field on the tangent space.}. Further, there is always a residual, yet trivial, freedom: that reflected in the action of the group 
                           $GL(n,\mathbb{R})$.
\end{itemize}
Although a rare phenomenon, nothing prevents these two categories from having common members; frame basis changes inducing diffeomorphisms might exist.\\
If $O(\mathbf{h}) $ and $\mathfrak{o}(\mathbf{h})$ are a Lie group and its Lie algebra respectively\footnote{Of order $n(n-1)/2$. The disconnected, to the identity, components of the initial continuous group have been neglected.}, described in $GF_{2}$ category:
\bse
\begin{align}
O(\mathbf{h}) & =\{Q^{A}_{\phantom{1}B}: h_{AB}Q^{A}_{\phantom{1}M}Q^{B}_{\phantom{1}N}=h_{MN}\},\quad\text{in general:}\quad dQ^{A}_{\phantom{1}B}\neq 0\\
\mathfrak{o}(\mathbf{h}) & =\{q^{A}_{\phantom{1}B}: h_{AM}q^{M}_{\phantom{1}B}+h_{MB}q^{M}_{\phantom{1}A}=0\},\quad\text{in general:}\quad dq^{A}_{\phantom{1}B}\neq 0
\end{align}
\ese
then it is clear that an irreducible frame metric is presupposed, for spurious degrees of freedom in $\mathbf{h}$ might lead to a different group.

Apparently, category $GF_{2}$ ---by its definition, as a symmetry of the metric tensor field $\mathbf{h}$--- should  not contribute to the search for an irreducible form for the latter. On the other hand, the residual freedom provided by $GL(n,\mathbb{R})$, as it stands, does not suffice; not only it does not cover the case where $\mathbf{h}$ might depend on irrelevant parameters, but it might also lead to wrong results. For instance, since $SO(n_{1},n-n_{1},\mathbb{R})\triangleleft GL(n,\mathbb{R})$ (i.e., a subgroup), rotations can be used to diagonalise any matrix $h_{AB}$ with signature $(n_{1},n-n_{1})$, but this is a general result only when the homogeneity corresponds to an abelian group --as it is well known. The solution towards the search for an irreducible form for the frame metric in an $n$-dimensional LHS is provided by the following:
\begin{pro}
Let a LHS be characterised by a Lie algebra \eqref{LieAlgebra}, along with a fundamental form:
\be
g_{\text{red.}}=h_{AB}\Si^{A}\otimes\Si^{B}
\ee
defined throughout $\bigcup_{q \in \mathcal{K}}T^{*}_{q}(\mathcal{M})$. Then, \underline{a class of irreducible frame metrics} is given, in infinitesimal form, by the family:
\be
\{h_{AB}+h_{AM}C^{M}_{\phantom{1}NB}\varepsilon^{N}+h_{MB}C^{M}_{\phantom{1}NA}\varepsilon^{N}\}
\ee
where the set $\{\varepsilon^{N}\}$ is a collection of, at most $n$, functions depending on the outer variables of $\mathbf{h}$ ---if any--- multiplied by parameters close to zero.
\end{pro}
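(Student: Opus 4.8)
The plan is to identify the residual gauge freedom that remains after the frame metric $h_{AB}$ has been produced by the invariant coframe $\{\Si^{A}\}$, and to show that its inner part acts on $h_{AB}$ precisely through the displayed family. The guiding observation is that any admissible reduction of $h_{AB}$ must be effected by a change of the invariant frame $\{\X_{A}\}\to\{\X'_{A}\}$ that preserves the two defining relations \eqref{InvariantFrameA} and \eqref{InvariantFrameB}; in particular it must leave the structure constants $C^{M}_{\phantom{1}AB}$ untouched, since these encode the very Lie algebra \eqref{LieAlgebra} characterising the space. Infinitesimally I would write $\X'_{A}=\X_{A}+D^{M}_{\phantom{1}A}\X_{M}$ and extract the constraints on $D^{M}_{\phantom{1}A}$ imposed by these two requirements, so that the whole question is reduced to the admissible infinitesimal automorphisms of $\mathfrak{g}_{n}$.

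For the first key step I would impose the Killing-invariance \eqref{InvariantFrameA}. Acting with $\pounds_{\Ksi_{A}}$ on $\X'_{B}$ and using \eqref{InvariantFrameA} for the unprimed frame leaves only the term $(\pounds_{\Ksi_{A}}D^{M}_{\phantom{1}B})\X_{M}$; hence the invariant-frame property is respected iff $\pounds_{\Ksi_{A}}D^{M}_{\phantom{1}B}=0$, i.e. the coefficients are constant along $\mathcal{M}$ and may depend only on the outer variables. This is exactly the origin of the phrase ``functions depending on the outer variables $\ldots$ multiplied by parameters close to zero''. For the second key step I would make the inner ansatz $D^{M}_{\phantom{1}A}=C^{M}_{\phantom{1}NA}\varepsilon^{N}$ and verify that it preserves \eqref{InvariantFrameB}: computing $[\X'_{A},\X'_{B}]$ to first order and demanding it equal $-C^{P}_{\phantom{1}AB}\X'_{P}$ reduces, after using the constancy just established, to the derivation identity $C^{M}_{\phantom{1}NA}C^{P}_{\phantom{1}MB}+C^{M}_{\phantom{1}AB}C^{P}_{\phantom{1}MN}+C^{M}_{\phantom{1}BN}C^{P}_{\phantom{1}MA}=0$, which is nothing but the Jacobi identity obeyed by the structure constants (the algebraic constraint already noted in Section~1, equivalently set~(C) of the CSEs), read with the triple $(N,A,B)$. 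Thus inner derivations are admissible automatically. The final step is purely algebraic: since $h_{AB}=\mathbf{g}(\X_{A},\X_{B})$, the induced change is $\delta h_{AB}=h_{MB}D^{M}_{\phantom{1}A}+h_{AM}D^{M}_{\phantom{1}B}=h_{MB}C^{M}_{\phantom{1}NA}\varepsilon^{N}+h_{AM}C^{M}_{\phantom{1}NB}\varepsilon^{N}$, which upon adding $h_{AB}$ gives exactly the stated family; one should also check that the reduced form \eqref{RedFundamentalForm} remains compatible, which is immediate from \eqref{KsiLieDraggingSigma}.

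I expect the main obstacle to be conceptual rather than computational: justifying that it is legitimately the \emph{inner} automorphisms, and not some larger or smaller set, that furnish this particular class of reductions --- that is, arguing that the adjoint orbit of the initial data $\X^{0}_{A}$ is a genuine, diffeomorphism-induced freedom leaving the geometry invariant, whereas generic outer automorphisms need not be realisable in this way. Tied to this is the counting claim ``at most $n$'': the parameters $\varepsilon^{N}$ lying in the centre of $\mathfrak{g}_{n}$ satisfy $C^{M}_{\phantom{1}NA}\varepsilon^{N}=0$ and therefore act trivially, so the number of effective independent directions equals $n$ minus the dimension of the centre; this is the reason for the qualifier ``at most'', and I would record it explicitly to close the argument.
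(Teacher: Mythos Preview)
Your computational skeleton is correct and matches the paper: constancy of $D^{M}_{\phantom{1}A}$ from \eqref{InvariantFrameA}, the automorphism condition from \eqref{InvariantFrameB}, and the resulting shift of $h_{AB}$ are all as in the paper. The difference is that you \emph{assume} the inner ansatz $D^{M}_{\phantom{1}A}=C^{M}_{\phantom{1}NA}\varepsilon^{N}$ and then verify it is an automorphism via Jacobi, whereas the paper \emph{derives} that only the inner part is relevant. You flag this yourself as the ``main obstacle'', and it is indeed the one substantive gap in your argument.

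The paper closes it as follows. Having obtained the full automorphism algebra $\{\lambda^{A}_{\phantom{1}(\alpha_{i})B}\}=\{C^{A}_{\phantom{1}(a_{I})B},E^{A}_{\phantom{1}(\alpha_{J})B}\}$ (inner plus outer), it asks which of these frame changes are actually induced by infinitesimal diffeomorphisms of $\mathcal{M}$. A generator is taken in the form $\boldsymbol{\Pi}_{\alpha_{i}}=\Sigma^{A}_{\phantom{1}\alpha_{i}}\X_{A}$ with $d\Sigma^{A}_{\phantom{1}\alpha_{i}}=0$; the constancy of the coefficients is justified by requiring that the initial KVFs be kept form-invariant, $\pounds_{\boldsymbol{\Pi}_{\alpha_{i}}}\Ksi_{A}=0$, and by the need for the freedom to survive embedding into higher dimensions. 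One then computes $\pounds_{\boldsymbol{\Pi}_{\alpha_{i}}}\Si^{A}$ using \eqref{Duality} and \eqref{InvariantFrameB} and compares with the automorphism action $\lambda^{A}_{\phantom{1}(\alpha_{i})B}\Si^{B}$; the outcome is $C^{A}_{\phantom{1}MB}\Sigma^{M}_{\phantom{1}\alpha_{i}}=\lambda^{A}_{\phantom{1}(\alpha_{i})B}$, so precisely the inner generators are realised. Setting $\varepsilon^{N}=-\Sigma^{N}_{\phantom{1}a_{I}}w^{a_{I}}$ then gives the displayed family. This is the missing link between ``admissible automorphism of the algebra'' and ``genuine gauge redundancy of the geometry'' that your sketch anticipates but does not supply; once you add this diffeomorphism calculation, your proof coincides with the paper's.
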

\begin{proof}
Let $\mathbb{F}$ be the set:
\be
\mathbb{F}=\left\{ 
\begin{aligned}
(S^{A}_{\phantom{1}B}: U_{AB} & \rightarrow Y_{AB},\quad  U_{AB}, Y_{AB} \subseteq \mathbb{R},\ \text{analytic}~\forall\ A, B)\\
& \wedge\\
(|S^{A}_{\phantom{1}B}| & \neq 0)\\
& \wedge\\
(S^{A}_{\phantom{1}B}C^{B}_{\phantom{1}MN} & =C^{A}_{\phantom{1}KL}S^{K}_{\phantom{1}M}S^{L}_{\phantom{1}N})
\end{aligned}
\right\}
\ee
i.e., the set of all, structure constants form preserving, invertible matrices, with each component of which defining a different, analytic mapping from a subset of $\mathbb{R}$ to another subset. In total, $n^{2}$ analytic mappings and two families $\{U_{AB}\}$ and $\{Y_{AB}\}$, with $n^{2}$ members each,  of subsets of $\mathbb{R}$ are needed.\\
It should be obvious that $\mathbb{F}$ defines multi parametric subsets of $GL(n,\mathbb{R})$: each for every $n^{2}$-tuple of points, say $\{w^{\alpha}\}_{\alpha \in \{1,\ldots,n^{2}\}}$, on the family of sets $\{U_{AB}\}$. This set, endowed with the usual operations, can be given the structure of a local Lie group; its corresponding Lie algebra $\mathfrak{F}$ will be spanned by a subspace of $\mathfrak{gl}(n,\mathbb{R})$.\\
A trivial calculation proves that the set:
\be
\{\widetilde{\X}_{B}\}=\{\X_{A}(S^{-1})^{A}_{\phantom{1}B},\ S^{A}_{\phantom{1}B} \in \mathbb{F}\}
\ee
and only that, preserves the form of the system of PDEs \eqref{InvariantFrameA} along with the (consequences of its) initial conditions, i.e., \eqref{InvariantFrameB}, thus constituting a Lie-point symmetry\footnote{As within the context of differential equations --see the 7th from references \cite{ContinuousGroupsOfTransoformations}.} of this system. The duality expressed in \eqref{Duality} induces exactly the same symmetry on the corresponding cotangent space; the set: 
\be\label{AutIndDiffeo}
\{\widetilde{\Si}^{A}\}=\{S^{A}_{\phantom{1}B}\Si^{B},\ S^{A}_{\phantom{1}B} \in \mathbb{F}\}
\ee
preserves the manifest homogeneity of the first fundamental form:
\be
\left.\begin{aligned}
& g_{\text{red.}}=h_{AB}\Si^{A}\otimes\Si^{B}\\
& g_{\text{red.}}=\widetilde{h}_{MN}\widetilde{\Si}^{M}\otimes\widetilde{\Si}^{N}
\end{aligned}
\right\}\stackrel{\eqref{AutIndDiffeo}}{\Rightarrow} \widetilde{h}_{MN}=h_{AB}(S^{-1})^{A}_{\phantom{1}M}(S^{-1})^{B}_{\phantom{1}N}\label{EquivalentFrameMetrics}
\ee
It is trivial to prove\footnote{The proof of that statement is trivial; one has only to consider the CSEs corresponding to two first fundamental forms, $g$ and $\widetilde{g}$ in terms of the same coframe vectors, with frame metrics related as in \eqref{EquivalentFrameMetrics}, and the defining property of the matrices $S^{A}_{\phantom{1}B}$, i.e., that of the structure constants form preservation.} that this relation defines an equivalence class for frame metrics; i.e., $h_{AB}$ and $\widetilde{h}_{AB}$ determine the same local Riemannian geometry --yet a redundancy, hidden in the matrices $S^{A}_{\phantom{1}B}$, might (dis)appear at will.

Next, it would be most useful to find the Lie algebra $\mathfrak{F}$, and for this scope it is necessary to consider those matrices which are connected to the identity:
\be
S^{A}_{\phantom{1}B}(0)=\delta^{A}_{\phantom{1}B}
\ee
Then, substitution to:
\be
S^{A}_{\phantom{1}B}(w^{\alpha})C^{B}_{\phantom{1}MN}=C^{A}_{\phantom{1}KL}S^{K}_{\phantom{1}M}(w^{\alpha})S^{L}_{\phantom{1}N}(w^{\alpha})
\ee
and differentiation with respect to each particular parameter $w^{\alpha_{i}}$ at zero, results in:
\bse\label{AutGenerators}
\begin{align}
&\lambda^{A}_{\phantom{1}(\alpha_{i})B}C^{B}_{\phantom{1}MN}=\lambda^{Q}_{\phantom{1}(\alpha_{i})M}C^{A}_{\phantom{1}QN}+\lambda^{Q}_{\phantom{1}(\alpha_{i})N}C^{A}_{\phantom{1}MQ}\\
&\text{where:}\notag\\
&\lambda^{A}_{\phantom{1}(\alpha_{i})B}\equiv\frac{dS^{A}_{\phantom{1}B}(w^{\alpha})}{d w^{\alpha_{i}}}\Big|_{w^{\alpha}=0},\quad \alpha_{i} \in \{1,\ldots,n^{2}\}
\end{align}
\ese
are the requested generators. The number of independent solutions to this linear system determines the number of the independent parameters (i.e., the upper bound for $\alpha_{i}$ --say $q\leq n^{2}$), and thus the number of the generators of $\mathfrak{F}$. Essentially, $\mathbb{F}$ defines the \emph{automorphism group} corresponding to $G_{n}$. Due to the Jacobi identities, the space of solutions to \eqref{AutGenerators} can be written as:
\be
\{\lambda^{A}_{\phantom{1}(\alpha_{i})B}\}=\{C^{A}_{\phantom{1}(a_{I})B}, E^{A}_{\phantom{1}(\alpha_{J})B}\},\quad \left\{ \begin{aligned}
& a_{I} \in \{1,\ldots,n\}\\
& \alpha_{J} \in \{n+1,\ldots,q\}
\end{aligned}\right.
\ee
The subset of $\{C^{A}_{\phantom{1}(a_{I})B}\}$, as generators, defines a Lie algebra corresponding to the \emph{inner automorphism} group; a subgroup of the automorphism group.

Since the interest is focused on the infinitesimal form for the transformation of the frame metrics \eqref{EquivalentFrameMetrics}, it suffices to consider automorphic group elements near the identity:
\be\label{InfinitesimalAut}
S^{A}_{\phantom{1}B}(w^{\alpha}) \simeq \delta^{A}_{\phantom{1}B}+w^{\alpha_{i}}\lambda^{A}_{\phantom{1}(\alpha_{i})B},\quad w^{\alpha_{i}}\rightarrow 0
\ee
Then, condition \eqref{EquivalentFrameMetrics} becomes (up to the first order):
\be\label{TowardsTheFinalClass}
\widetilde{h}_{AB}\simeq h_{AB}-h_{AM}\lambda^{M}_{\phantom{1}(\alpha_{i})B}w^{\alpha_{i}}-h_{MB}\lambda^{M}_{\phantom{1}(\alpha_{i})A}w^{\alpha_{i}},\quad w^{\alpha_{i}}\rightarrow 0
\ee
At this point, it is necessary to remember that these changes upon the form of the frame metric can be divided into two categories: those which are induced by diffeomorphisms, and those which are symmetries of the frame metric (i.e., they leave it form invariant as a Lorentz transformation leaves form invariant the Minkowski metric on the tangent space). Since an irreducible form for $h_{AB}$ is not known (as this is the desired result), it is not possible ---at this stage--- to identify which part(s) of \eqref{TowardsTheFinalClass} belong to the first and which to the second category. But, the first category must be induced by equally infinitesimal transformations.

It is, therefore, deemed appropriate to find which generators, contributing to \eqref{InfinitesimalAut}, are induced by infinitesimal GCTs.\\
By construction, the set $\bigcup_{q \in \mathcal{K}}T_{q}(\mathcal{M})$ not only contains the set $\{X_{A}\}$, without any singular points, but also constitutes a real vector space. Thus, the generator of any desired infinitesimal GCT will be a linear combination of those vector fields. There are two possibilities for this linear combination: having either constant or non constant coefficients. The first possibility not only ensures, in principle, the existence of well defined vector fields, throughout $\bigcup_{q \in \mathcal{K}}T_{q}(\mathcal{M})$, but also preserves the form invariance of the initially adopted KVFs; indeed, the Lie derivative of the KVFs with respect to any generator of the desired GCTs vanishes if and only if the coefficients in the linear combination are constant. The second possibility, which corresponds to a kind of self similarity of the initially adopted KVFs, leads to a more rich structure. Yet, this second possibility is more restrictive in the following sense: a linear combination with non constant coefficients might not be well defined everywhere. Also, the resulted freedom, being of local character (with respect to $\bigcup_{q \in \mathcal{K}}T_{q}(\mathcal{M})$), can not be extended in many cases --like that of embedding to higher dimensions. For all these reasons, the first possibility ---which obviously constitutes a minimum requirement--- is adopted.\\
Let $\boldsymbol{\Pi}_{\alpha_{i}}$ be a member of a class of vector fields operating as generators for a family of infinitesimal (i.e., multi parametric) GCTs:
\bse
\begin{align}
& \boldsymbol{\Pi}_{\alpha_{i}}=\Sigma^{A}_{\phantom{1}\alpha_{i}}\X_{A},\quad d\Sigma^{A}_{\phantom{1}\alpha_{i}}=0
~(d\Sigma^{A}_{\phantom{1}\alpha_{i}}=0\stackrel{\eqref{InvariantFrameA}}{\iff} \pounds_{ \boldsymbol{\Pi}_{\alpha_{i}}}\Ksi_{A}=0)\label{InfinitesimalGCTA}\\
& \{x^{a}\}\rightarrow \{\widetilde{x}^{a}\}: \widetilde{x}^{a}+w^{\alpha_{i}}\Pi^{a}_{\phantom{1}\alpha_{i}}(x^{b}),\quad w^{\alpha_{i}}\rightarrow 0\label{InfinitesimalGCTB}
\end{align}
\ese
The quantities $\Sigma^{A}_{\phantom{1}\alpha_{i}}$ may (and will) depend on those outer parameters upon which $\mathbf{h}$ is dependent.
Before continuing, a simple observation: by \eqref{InfinitesimalAut}, the infinitesimal version of  \eqref{AutIndDiffeo} can easily be read off:
\be
\widetilde{\Si}^{A}\simeq \Si^{A}+w^{\alpha_{i}}\lambda^{A}_{\phantom{1}(\alpha_{i})B}\Si^{B},\quad w^{\alpha_{i}}\rightarrow 0
\ee
Thus, if the change in form of the coframe vectors is supposed to be induced by \eqref{InfinitesimalGCTB}, then the condition:
\be\label{ForTheExistenceOfGCT}
\pounds_{\boldsymbol{\Pi}_{\alpha_{i}}}\Si^{A}=\lambda^{A}_{\phantom{1}(\alpha_{i})B}\Si^{B}
\ee
must hold on $\bigcup_{q \in \mathcal{K}}T^{*}_{q}(\mathcal{M})$. By virtue of the Jacobi identities, the integrability condition of \eqref{ForTheExistenceOfGCT} is empty, thus this system always admits a well defined solution of the form \eqref{InfinitesimalGCTB}. Therefore, the setting of the system \eqref{InfinitesimalGCTB} and \eqref{ForTheExistenceOfGCT} is well posed and always admits a solution, because of both the definition for the set $\bigcup_{q \in \mathcal{K}}T^{*}_{q}(\mathcal{M})$ and its (empty) integrability conditions.\\
Using, \eqref{InfinitesimalGCTA}, \eqref{Duality}, and a few standard properties of the Lie derivative\footnote{See, Section 2.}, it is inferred that:
\be
C^{A}_{\phantom{1}MB}\Sigma^{M}_{\phantom{1}\alpha_{i}}=\lambda^{A}_{\phantom{1}\alpha_{i}B},\quad \text{and thus:}\quad \alpha_{i} = a_{I}
\ee
In other words, only the inner automorphisms can be induced by infinitesimal diffeomorphisms --cf.\ \cite{Gauge}. Substitution of the last result to \eqref{TowardsTheFinalClass} and the allocation $\varepsilon^{N}\equiv -\Sigma^{N}_{\phantom{1}a_{I}}w^{a_{I}}$ complete the proof. 
\end{proof}
At this point, it should be stressed, once again, that a non constant linear combination for the generators $\boldsymbol{\Pi}_{\alpha_{i}}$ (i.e., $d\Sigma^{A}_{\phantom{1}\alpha_{i}}\neq 0$) would have led to both inner and outer automorphisms (the \emph{rich structure} mentioned earlier in the last part of the proof), but then one would be in position of implementing that structure only in the case where the study would exclusively be restricted on the homogenous spaces per se, without any ambition to consider them as a part of a more complex system --like when a homogeneous space is embedded into another higher dimensional space.\\
Therefore, from this point of view, the Proposition offers \emph{a class rather than a unique family} of irreducible frame metrics. Although it seems that not the entire ``symmetry (i.e., inner and outer automorphisms) of the symmetry (i.e., Lie group of transformations acting simply transitively)'' is exploited, a use of which would be done at the expense of a restricted study, the results are based on the least minimum requirement and are valid, without any modification, even within other more complex frameworks --see, e.g., the third reference of \cite{Gauge}. 

The Proposition defines a class of geometrically equivalent and irreducible frame metrics --provided that the entire freedom represented by the descriptors $\{\varepsilon^{A}\}$ has been implemented. This result stands on its own right: indeed, it has been proven that a (sub)group of the automorphism group ---that of the inner automorphisms--- not only is induced by the action of another local Lie group ---that of diffeomorphisms--- but also gives a geometric definition of gauge symmetries. Of course, this is not quite a new result; the symmetry of the symmetry is of central importance in many research works. An indicative idea of the general interest might be found in \cite{Gauge} and the references therein. On the other hand though, in many instances, there is a kind of disagreement on what is (or should be) considered as gauge freedom; see the results found in \cite{GlobalandGauge} as they are opposed to those in the first of references \cite{Gauge}. 

In any case, the first goal towards the discovery of the full symmetry group, i.e., the irreducible form for the frame metric, has been achieved. 


\subsection{Induced Symmetry}

Let, again, a LHS be characterised by a Lie algebra \eqref{LieAlgebra}, along with a fundamental form:
\be
g_{\text{irred.}}=h^{\text{irred.}}_{AB}\Si^{A}\otimes\Si^{B}
\ee
defined throughout $\bigcup_{q \in \mathcal{K}}T^{*}_{q}(\mathcal{M})$. It is meant that the freedom provided by the inner automorphisms (sub)group of $G_{n}$ has been implemented in order to cast the frame metric in an irreducible (though not unique) form.

If a metric tensor field $\mathbf{g}_{\text{irred.}}$ admits a KVF(s) $\Ze$, irrelevant to the initial KVFs which describe homogeneity, then the system of the Killing equations:
\be
\pounds_{\Ze}\mathbf{g}_{\text{irred.}}=0
\ee
is completely equivalent to:
\be
\pounds_{\Ze}g_{\text{irred.}}=0
\ee
for the corresponding first fundamental form $g$. Following \cite{Chinea}, this system is, in turn, equivalent to:
\be\label{TowardsFinalSymmetry}
\left.\begin{aligned}
& \pounds_{\Ze}g_{\text{irred.}}=0\\
& \text{plus}\\
& \text{Integrability}\\
& \text{Conditions}
\end{aligned}\right\}\Leftrightarrow 
\left\{ 
\begin{aligned}
& g_{\text{irred.}}=h^{\text{irred.}}_{AB}\Si^{A}\otimes\Si^{B}\\
& dh^{\text{irred.}}_{AB}=0\\
& \pounds_{\Ze}\Si^{A}=\Psi^{A}_{\phantom{1}B}\Si^{B},\quad\text{in general:}\quad d\Psi^{A}_{\phantom{1}B}\neq 0\\
& h^{\text{irred.}}_{AM}\Psi^{M}_{\phantom{1}B}+h^{\text{irred.}}_{MB}\Psi^{M}_{\phantom{1}A}=0\\
& \pounds_{\Ze}\Ga^{A}_{\phantom{1}B}=\Psi^{A}_{\phantom{1}M}\Ga^{M}_{\phantom{1}B}-\Ga^{A}_{\phantom{1}M}\Psi^{M}_{\phantom{1}B}-d\Psi^{A}_{\phantom{1}B}\\
& \pounds_{\Ze}\Om^{A}_{\phantom{1}B}=\Psi^{A}_{\phantom{1}M}\Om^{M}_{\phantom{1}B}-\Om^{A}_{\phantom{1}M}\Psi^{M}_{\phantom{1}B}\\
& \text{plus}\\
& \text{CSEs}
\end{aligned}
\right.
\ee
A further simplification is possible by two observations:
\begin{itemize}
\item[$Ob_{1}$] Because of: \eqref{InvariantFrameB}, \eqref{Duality}, the first set of the CSEs becomes:
                           \be
                           d\Si^{A}=\frac{1}{2}C^{A}_{\phantom{1}MN}\Si^{M}\wedge\Si^{N}
                           \ee
                           also, cf.\  \eqref{ForObservation1}.
\item[$Ob_{2}$] The condition $h^{\text{irred.}}_{AM}\Psi^{M}_{\phantom{1}B}+h^{\text{irred.}}_{MB}\Psi^{M}_{\phantom{1}A}=0$ denotes the antisymmetry, in its indices, of the matrix 
                           $\Psi_{AB}\equiv h^{\text{irred.}}_{AM}\Psi^{M}_{\phantom{1}B}$. Therefore, the latter can be parametrized by an antisymmetric matrix:
                           \bse
                           \begin{align}
                           & h^{\text{irred.}}_{AM}\Psi^{M}_{\phantom{1}B}=F_{AB}\Rightarrow \Psi^{A}_{\phantom{1}B}=h_{\text{irred.}}^{AM}F_{MB}\\
                           & F_{(AB)}=0
                           \end{align}
                           \ese      
\end{itemize}
Using these observations, \eqref{TowardsFinalSymmetry} assumes its simplest form:
\be\label{FinalSymmetry}
\left\{ 
\begin{aligned}
& g_{\text{irred.}}=h^{\text{irred.}}_{AB}\Si^{A}\otimes\Si^{B}\\
& dh^{\text{irred.}}_{AB}=0\\
& \pounds_{\Ze}\Si^{A}=h_{\text{irred.}}^{AN}F_{NB}\Si^{B},\quad\text{in general:}\quad dF_{AB}\neq 0\\
& F_{(AB)}=0\\
& d\Si^{A}=\frac{1}{2}C^{A}_{\phantom{1}MN}\Si^{M}\wedge\Si^{N}\\
& \Ga^{A}_{\phantom{1}B}=\gamma^{A}_{\phantom{1}BM}\Si^{M}, \gamma^{A}_{\phantom{1}[BM]}=\frac{1}{2}C^{A}_{\phantom{1}BM}\\
& h^{\text{irred.}}_{AS}\gamma^{S}_{\phantom{1}BM}+h^{\text{irred.}}_{SB}\gamma^{S}_{\phantom{1}AM}=0\\
& \pounds_{\Ze}\Ga^{A}_{\phantom{1}B}=(h_{\text{irred.}}^{AN}F_{NM})\Ga^{M}_{\phantom{1}B}-\Ga^{A}_{\phantom{1}M}(h_{\text{irred.}}^{MN}F_{NB})-h_{\text{irred.}}^{AN}dF_{NB}\\
& \Om^{A}_{\phantom{1}B}=\frac{1}{2}R^{A}_{\phantom{1}BMN}\Si^{M}\wedge\Si^{N}\\
& \Om^{A}_{\phantom{1}B}=d \Ga^{A}_{\phantom{1}B}+\Ga^{A}_{\phantom{1}M}\wedge\Ga^{M}_{\phantom{1}B}\\
& \pounds_{\Ze}\Om^{A}_{\phantom{1}B}=(h_{\text{irred.}}^{AN}F_{NM})\Om^{M}_{\phantom{1}B}-\Om^{A}_{\phantom{1}M}(h_{\text{irred.}}^{MN}F_{NB})
\end{aligned}
\right\}
\ee
The merits of this approach are both simplicity and clarity. Simplicity, because the last integrability condition imposes algebraic constraints upon the matrix $F_{AB}$ --something which renders the solving process, regarding the other steps, easier. Clarity, for the nature of the matrix $F_{AB}$ encodes much information: since it is antisymmetric, only up to $n(n-1)/2$ independent components may exist, each for every extra KVF $\Ze$. Thus, if $F_{AB}=0$ then the only KVFs admitted by the space are those given initially. On the other hand, when the number of components is the maximum, then the total number of the admitted KVFs is: $n$ (those expressing homogeneity)+$n(n-1)/2$ (those corresponding to the matrix $F_{AB}$)=$n(n+1)/2$, i.e., the case of maximally symmetric spaces \cite{ContinuousGroupsOfTransoformations}.


\section{LHS in 3 dimensions, and applications: Bianchi cosmological prototypes and special solutions to EFEs}

\begin{conv}
Lower case Greek indices are used for any coordinate space in $4$ dimensions. It is assumed that all 3 dimensional LHS have metric tensor fields of Euclidean signature: $(+,+,+)$.
\end{conv} 

This section is divided into two parts: in the first part, the analysis developed in the previous section, i.e., the Proposition regarding the irreducible form for the frame metric $\mathbf{h}$ along with the symmetry system \eqref{FinalSymmetry}, is applied when $n=3$. In this case (which is of physical interest), there are nine distinct continuous Lie groups of transformations associated with nine real Lie algebras. These groups and their algebras have been classified by L. Bianchi\footnote{L. Bianchi:\\
1. Mem.\ della Soc.\ Italiana delle Scienze Ser.\ 3a, \textbf{11}, (1897), 267;\\
2. \textit{Lezioni Sulla Teoria Dei Gruppi Continui Finiti Di Transformazioni}, Spoerri, Pisa, 1918} --although S. Lie found them first (see the corresponding reference in \cite{ContinuousGroupsOfTransoformations}). Thus, the \emph{Bianchi Types I-IX} emerged.\\
In the second part, the results found in the first part are used in the search for special solutions to the EFEs. No new ---to the literature--- solutions are obtained, yet this pedantic application carries the ambition to exhibit a simple application of the entire analysis, something which might be of interest in other areas.

\subsection{Bianchi Types}

For every Bianchi Type, the information given includes the structure constants $C^{E}_{\phantom{1}AB}$ of the corresponding Lie algebra, the Killing vector fields $\{\Ksi_{A}\}$, the set of both left invariant fields $\{\X_{A}\}$, and right invariants $\{\Si^{A}\}$ ---as in \cite{RyanShepley}--- and an irreducible form for the frame metric $h^{\text{irred.}}_{AB}$ along with the further KVF(s). Of course, a local coordinate system $\{x^{A}\}=\{x,y,z\}$ has been adopted to express not only all these vector fields with the coordinate basis $\{\partial_{x^{A}}\equiv\partial/\partial x^{A}\}$, but also the dual basis $\{d x^{A}\}$. Also, Bianchi Types VI and VII are each a family of groups parametrized by $q$ within the limits given.\\
Finally, it should be stressed that only the most generic case will be of interest each time. To this end, the initial reducible frame metric will be assumed to be the most general: 
\bdm
h^{\text{red.}}_{AB}=  \begin{pmatrix}
                                        \widetilde{h}_{11} & \widetilde{h}_{12} & \widetilde{h}_{13} \\
                                        \widetilde{h}_{12} & \widetilde{h}_{22} & \widetilde{h}_{23} \\
                                        \widetilde{h}_{13} & \widetilde{h}_{23} & \widetilde{h}_{33}
                                       \end{pmatrix}
\edm
i.e., neither further relations amongst the $\widetilde{h}_{ij}$ nor discriminating cases will be considered. The only restriction is that all the frame metrics $\mathbf{h}$ (i.e., both reducible and irreducible) are supposed to be positive definite. It is obvious, that specializations ---like that corresponding to biaxial symmetry (i.e., $\widetilde{h}_{11}=\widetilde{h}_{22}$), etc.--- on the frame metric components may lead to further symmetries. 

\subsubsection*{Bianchi Type I}
This type is characterised by:
\bse
\begin{align}
C^{E}_{\phantom{1}AB} & = 0 \\
\{\Ksi_{A}\} & = \{\partial_{x}, \partial_{y}, \partial_{z}\}\\
\{\X_{A}\} & = \{\partial_{x}, \partial_{y}, \partial_{z}\}\\
\{\Si^{A}\} & = \{dx, dy, dz\}\\
h^{\text{irred.}}_{AB} & = \begin{pmatrix}
                                              h_{11} & 0          & 0\\
                                              0          & h_{22} & 0 \\
                                              0          & 0           & h_{33}
                                              \end{pmatrix}
\end{align}
\ese
Although all the structure constants vanish, the action of the group $GL(3,\mathbb{R})$ can still be implemented. In fact, the maximum number (i.e., 3) of GCTs can be used to bring the frame metric to a diagonal form; the degrees of freedom left are just the three eigenvalues of the matrix. This is the only case with such a singular behaviour --a characteristic feature of all the abelian prototypes, irrespectively of the dimensions.\\
There are three more KVFs:
\be
\{\Ze_{A}\}= & \{-\frac{z}{h_{22}}\partial_{y}+\frac{y}{h_{33}}\partial_{z}, -\frac{z}{h_{11}}\partial_{x}+\frac{x}{h_{33}}\partial_{z}, -\frac{y}{h_{11}}\partial_{x}+\frac{x}{h_{22}}\partial_{y} \}
\ee
Therefore, Bianchi Type I admits a $G_{6}$ local Lie group of transformations, with the corresponding algebra (only non vanishing commutators are given):
\be\label{Type_I_G6}
\begin{array}{cccc}
&[\Ze_{1},\Ze_{2}]=\frac{1}{h_{33}}\Ze_{3} &[\Ze_{2},\Ze_{3}]=\frac{1}{h_{11}}\Ze_{1} & [\Ze_{3},\Ze_{1}]=\frac{1}{h_{22}}\Ze_{2} \\
& & &  \\
&[\Ksi_{2},\Ze_{1}]=\frac{1}{h_{33}}\Ksi_{3} & [\Ksi_{3},\Ze_{1}]=-\frac{1}{h_{22}}\Ksi_{2} & [\Ksi_{1},\Ze_{2}]=\frac{1}{h_{33}}\Ksi_{3} \\
& & &  \\
&[\Ksi_{3},\Ze_{2}]=-\frac{1}{h_{11}}\Ksi_{1} &[\Ksi_{1},\Ze_{3}]=\frac{1}{h_{22}}\Ksi_{2}  & [\Ksi_{2},\Ze_{3}]=-\frac{1}{h_{11}}\Ksi_{1}
\end{array}
\ee
Of course, the space is maximally symmetric --as expected.

\subsubsection*{Bianchi Type II}
This type is characterised by:
\bse
\begin{align}
C^{1}_{\phantom{1}23} & =1\\
\{\Ksi_{A}\} & = \{\partial_{y}, \partial_{z}, \partial_{x}+z\partial_{y}\}\\
\{\X_{A}\} & = \{\partial_{y}, x\partial_{y}+\partial_{z}, \partial_{x}\}\\
\{\Si^{A}\} & = \{dy-xdz, dz, dx\}\\
h^{\text{irred.}}_{AB} & =  \begin{pmatrix}
                                              h_{11} & 0 & 0 \\
                                              0 & h_{22} & h_{23} \\
                                              0 & h_{23} & h_{33}
                                             \end{pmatrix}
\end{align}
\ese
Only two, out of the available three, GCTs can be used. There is one more KVF:
\be
\Ze=\frac{z h_{22}+x h_{23}}{h_{22}h_{33}-h_{23}^{2}}\partial_{x}+\frac{z^{2} h_{22}-x^{2} h_{33}}{2(h_{22}h_{33}-h_{23}^{2})}\partial_{y}-\frac{z h_{23}+x h_{33}}{h_{22}h_{33}-h_{23}^{2}}\partial{z}
\ee
Therefore, Bianchi Type II admits a $G_{4}$ local Lie group of transformations, with the corresponding algebra (only non vanishing commutators are given):
\be\label{Type_II_G4}
\begin{array}{cc}
&[\Ksi_{2},\Ksi_{3}] = \Ksi_{1}\\
& \\
& [\Ksi_{2},\Ze]=-\frac{h_{23}}{h_{22}h_{33}-h_{23}^{2}}\Ksi_{2}+\frac{h_{22}}{h_{22}h_{33}-h_{23}^{2}}\Ksi_{3}\\
&  \\
&[\Ksi_{3},\Ze]=-\frac{h_{33}}{h_{22}h_{33}-h_{23}^{2}}\Ksi_{2}+\frac{h_{23}}{h_{22}h_{33}-h_{23}^{2}}\Ksi_{3} 
\end{array}
\ee

\subsubsection*{Bianchi Type III}
This type is characterised by:
\bse
\begin{align}
C^{1}_{\phantom{1}13} & = 1\\
\{\Ksi_{A}\} & = \{\partial_{y}, \partial_{z}, \partial_{x}+y\partial_{y}\}\\
\{\X_{A}\} & =  \{e^{x}\partial_{y}, \partial_{z}, \partial_{x}\}\\
\{\Si^{A}\} & = \{e^{-x}dy, dz, dx\}\\
h^{\text{irred.}}_{AB} & = \begin{pmatrix}
                                             h_{11} & h_{12} & 0 \\
                                             h_{12} & h_{22} & 0 \\
                                             0 & 0 & h_{33}
                                             \end{pmatrix}
\end{align}
\ese
Only two, out of the available three, GCTs can be used. There is one more KVF:
\be
\Ze=\frac{y}{h_{33}}\partial_{x}+\Big(\frac{y^{2}}{2h_{33}}-\frac{e^{2x}h_{22}}{2(h_{11}h_{22}-h_{12}^{2})}\Big)\partial_{y}+\frac{e^{x}h_{12}}{h_{11}h_{22}-h_{12}^{2}}\partial_{z}
\ee
Therefore, Bianchi Type III admits a $G_{4}$ local Lie group of transformations, with the corresponding algebra (only non vanishing commutators are given):
\be\label{Type_III_G4}
\begin{array}{cccc}
&[\Ksi_{1},\Ksi_{3}] = \Ksi_{1} & [\Ksi_{1},\Ze]=\frac{1}{h_{33}}\Ksi_{3} & [\Ksi_{3},\Ze]=\Ze
\end{array}
\ee

\subsubsection*{Bianchi Type IV}
This type is characterised by:
\bse
\begin{align}
C^{1}_{\phantom{1}13} & =C^{1}_{\phantom{1}23}=C^{2}_{\phantom{1}23}=1\\
\{\Ksi_{A}\} & = \{\partial_{y}, \partial_{z}, \partial_{x}+(y+z)\partial_{y}+z\partial_{z}\}\\
\{\X_{A}\} & =  \{e^{x}\partial_{y}, xe^{x}\partial_{y}+e^{x}\partial_{z}, \partial_{x}\}\\
\{\Si^{A}\} & = \{e^{-x}dy-xe^{-x}dz, e^{-x}dz, dx\}\\
h^{\text{irred.}}_{AB} & = \begin{pmatrix}
                                              h_{11} & 0          & 0\\
                                              0          & h_{22} & 0 \\
                                              0          & 0           & h_{33}
                                              \end{pmatrix}
\end{align}
\ese
All the three available GCTs can be used to bring the frame metric to a diagonal form; the degrees of freedom left are just the three eigenvalues of the matrix. There is no further KVF, since the integrability conditions imply that $F_{AB}=0$.

\subsubsection*{Bianchi Type V}
This type is characterised by:
\bse
\begin{align}
C^{1}_{\phantom{1}13} & =C^{2}_{\phantom{1}23}=1\\
\{\Ksi_{A}\} & = \{\partial_{y}, \partial_{z}, \partial_{x}+y\partial_{y}+z\partial_{z}\}\\
\{\X_{A}\} & =  \{e^{x}\partial_{y}, e^{x}\partial_{z}, \partial_{x}\}\\
\{\Si^{A}\} & = \{e^{-x}dy, e^{-x}dz, dx\}\\
h^{\text{irred.}}_{AB} & = \begin{pmatrix}
                                              h_{11} & h_{12}          & 0\\
                                              h_{12} & h_{22} & 0 \\
                                              0          & 0           & \sqrt{h_{11}h_{22}-h_{12}^{2}}
                                              \end{pmatrix}
\end{align}
\ese
All the three available GCTs can be used to bring the frame metric to a block diagonal form. There are three more KVFs:
\bse
\begin{align}
\Ze_{1} & =  \frac{z}{h^{1/3}}\partial_{x}+\Big(\ \frac{y^{2}h_{11}h_{12}+2yzh_{11}h_{22}+z^{2}h_{12}h_{22}}{2 h}+\frac{e^{2x}h_{12}}{2 h^{2/3}} \Big)\partial_{y}\notag \\
& +\Big(\ \frac{-y^{2}h_{11}^{2}-2yzh_{11}h_{12}+z^{2}(h_{11}h_{22}-2h_{12}^{2})}{2 h}-\frac{e^{2x}h_{11}}{2 h^{2/3}} \Big)\partial_{z} \\
\Ze_{2} & =  \frac{y}{h^{1/3}}\partial_{x}+\Big(\ \frac{-2y^{2}h_{12}^{2}+y^{2}h_{11}h_{22}-2yzh_{12}h_{22}-z^{2}h_{22}^{2}}{2 h}-\frac{e^{2x}h_{22}}{2 h^{2/3}} \Big)\partial_{y}\notag\\
& +\Big(\ \frac{y^{2}h_{11}h_{12}+2yzh_{11}h_{22}+z^{2}h_{12}h_{22}}{2 h}+\frac{e^{2x}h_{12}}{2 h^{2/3}} \Big)\partial_{z} \\
\Ze_{3} & =  -\Big(\ \frac{y h_{12}+z h_{22}}{h^{2/3}} \Big)\partial_{y}+\Big(\ \frac{y h_{11}+z h_{12}}{h^{2/3}} \Big)\partial_{z}  \\
h & = (h_{11}h_{22}-h_{12}^{2})^{3/2}
\end{align}
\ese
Therefore, Bianchi Type V admits a $G_{6}$ local Lie group of transformations, with the corresponding algebra (only non vanishing commutators are given):
\be\label{Type_V_G6}
\begin{array}{ccc}
&[\Ksi_{2},\Ksi_{3}] =\Ksi_{2} & [\Ksi_{1},\Ksi_{3}]=\Ksi_{1} \\
& & \\
&[\Ze_{2},\Ze_{3}]=\frac{h_{22}}{h^{2/3}} \Ze_{1}+\frac{h_{12}}{h^{2/3}} \Ze_{2} & [\Ze_{3},\Ze_{1}] =\frac{h_{12}}{h^{2/3}} \Ze_{1}+\frac{h_{11}}{h^{2/3}} \Ze_{2}\\
& & \\
&[\Ksi_{1},\Ze_{1}]=-\frac{h_{11}}{h^{1/3}}\Ze_{3} & [\Ksi_{2},\Ze_{1}]=\frac{1}{h^{1/3}}\Ksi_{3}-\frac{h_{12}}{h^{1/3}}\Ze_{3} \\
& & \\
&  [\Ksi_{3},\Ze_{1}] =\Ze_{1}  & [\Ksi_{1},\Ze_{2}]= \frac{h_{12}}{h^{1/3}}\Ze_{3}+\frac{1}{h^{1/3}}\Ksi_{3} \\
& & \\
&  [\Ksi_{2},\Ze_{2}]= \frac{h_{22}}{h^{1/3}}\Ze_{3} &[\Ksi_{3},\Ze_{2}]= \Ze_{2} \\
& & \\
& [\Ksi_{1},\Ze_{3}]= \frac{h_{11}}{h^{2/3}}\Ksi_{2}-\frac{h_{12}}{h^{2/3}}\Ksi_{1} &[\Ksi_{2},\Ze_{3}]= \frac{h_{12}}{h^{2/3}}\Ksi_{2}-\frac{h_{22}}{h^{2/3}}\Ksi_{1} 
\end{array}
\ee
This result is in full agreement with the fact that the space is not only of constant curvature but also conformally flat. 

\subsubsection*{Bianchi Type VI}
This type is characterised by:
\bse
\begin{align}
C^{1}_{\phantom{1}13} & =1, C^{2}_{\phantom{1}23}=q (\neq 0,1)\\
\{\Ksi_{A}\} & = \{\partial_{y}, \partial_{z}, \partial_{x}+y\partial_{y}+q z\partial_{z}\}\\
\{\X_{A}\} & =  \{e^{x}\partial_{y}, e^{q x}\partial_{z}, \partial_{x}\}\\
\{\Si^{A}\} & = \{e^{-x}dy, e^{-q x}dz, dx\}\\
h^{\text{irred.}}_{AB}|_{q\neq -1} & = \begin{pmatrix}
                                              h_{11} & 0           & 0\\
                                              0          & h_{22} & 0 \\
                                              0          & 0           & h_{33}
                                              \end{pmatrix}\\
h^{\text{irred.}}_{AB}|_{q=-1} & = \begin{pmatrix}
                                              h_{11} & h_{12}  & 0\\
                                              h_{12} & h_{11} & 0 \\
                                              0          & 0           & h_{33}
                                              \end{pmatrix}
\end{align}
\ese
All the three available GCTs can be used to bring the frame metric to a diagonal form if $q\neq -1$, and to a block diagonal form if $q=-1$. There is no further KVF, since the integrability conditions imply that $F_{AB}=0$, for every value of the group parameter within the range given (and for both sectors: $q\neq -1$, and $q=-1$).

\subsubsection*{Bianchi Type VII}
This type is characterised by:
\bse
\begin{align}
C^{2}_{\phantom{1}13} & =1, C^{1}_{\phantom{1}23}=-1, C^{2}_{\phantom{1}23}=q (q^{2}<4)\\
\{\Ksi_{A}\} & =  \{\partial_{y}, \partial_{z}, \partial_{x}-z\partial_{y}+(y+q z)\partial_{z}\}\\
\{\X_{A}\} & =  \{A_{1}\partial_{y}-B\partial_{z}, B\partial_{y}+A_{2}\partial_{z}, \partial_{x}\}\\
\{\Si^{A}\} & =  \{C_{1}dy-Ddz, Ddy+C_{2}dz, dx\}\\
&\text{where:}\\
A_{1,2} & =e^{kx}cos(ax)\pm kB, B=-\frac{1}{a}e^{kx}sin(ax),\\
C_{1,2} & =e^{-kx}cos(ax)\mp kD, D=-\frac{1}{a}e^{-kx}sin(ax),\\
 k & =\frac{q}{2} , a=\frac{1}{2}(4-q^{2})^{1/2}\\
h^{\text{irred.}}_{AB} & = \begin{pmatrix}
                                              h_{11} & 0           & 0\\
                                             0           & h_{22} & 0 \\
                                              0          & 0           & h_{33}
                                              \end{pmatrix}
\end{align}
\ese
All the three available GCTs can be used to bring the frame metric to a diagonal form. There is no further KVF, since the integrability conditions imply that $F_{AB}=0$, for every value of the group parameter within the range given (and for both sectors: $q\neq 0$, and $q=0$).

\subsubsection*{Bianchi Type VIII}
This type is characterised by:
\bse
\begin{align}
C^{1}_{\phantom{1}23} & =-C^{2}_{\phantom{1}31}=-C^{3}_{\phantom{1}12}=-1\\
\{\Ksi_{A}\} =\{& \frac{e^{-z}}{2}\partial_{x}+\frac{1}{2}(e^{z}-y^{2}e^{-z})\partial_{y}-ye^{-z}\partial_{z},\partial_{z},\notag \\
& \frac{e^{-z}}{2}\partial_{x}-\frac{1}{2}(e^{z}+y^{2}e^{-z})\partial_{y}-ye^{-z}\partial_{z}\}\\
\{X_{A}\} =\{& \frac{1}{2}(1+x^{2})\partial_{x}+\frac{1}{2}(1-2xy)\partial_{y}-x\partial_{z},-x\partial_{x}+y\partial_{y}+\partial_{z},\notag \\
& \frac{1}{2}(1-x^{2})\partial_{x}+\frac{1}{2}(-1+2xy)\partial_{y}+x\partial_{z}\}\\
\{\Si^{A}\} =\{&  dx+(1+x^{2})dy+(x-y-x^{2}y)dz,2xdy+(1-2xy)dz,\notag\\
&  dx+(-1+x^{2})dy+(x+y-x^{2}y)dz\}\\
h^{\text{irred.}}_{AB} & = \begin{pmatrix}
                                              h_{11} & 0          & 0\\
                                              0          & h_{22} & 0 \\
                                              0          & 0           & h_{33}
                                              \end{pmatrix}
\end{align}
\ese
All the three available GCTs can be used to bring the frame metric to a diagonal form; the degrees of freedom left are just the three eigenvalues of the matrix. The generators of those infinitesimal GCTs define the $\mathfrak{so}(2,1)$ Lie algebra. There is no further KVF, since the integrability conditions imply that $F_{AB}=0$.

\subsubsection*{Bianchi Type IX}
This type is characterised by:
\bse
\begin{align}
C^{1}_{\phantom{1}23} & =C^{2}_{\phantom{1}31}=C^{3}_{\phantom{1}12}=1\\
\{\Ksi_{A}\} = \{& \partial_{y}, cos(y)\partial_{x}-cot(x)sin(y)\partial_{y}+\frac{sin(y)}{sin(x)}\partial_{z},\notag\\
& -sin(y)\partial_{x}-cot(x)cos(y)\partial_{y}+\frac{cos(y)}{sin(x)}\partial_{z}\}\\
\{\X_{A}\} =\{& -sin(z)\partial_{x}+\frac{cos(z)}{sin(x)}\partial_{y}-cot(x)cos(z)\partial_{z},\notag\\
& cos(z)\partial_{x}+\frac{sin(z)}{sin(x)}\partial_{y}-cot(x)sin(z)\partial_{z},\partial_{z} \}\\
\{\Si^{A}\}=\{&-sin(z)dx+sin(x)cos(z)dy,cos(z)dx+sin(x)sin(z)dy,\notag\\
& cos(x)dy+dz\}\\
h^{\text{irred.}}_{AB} & =  \begin{pmatrix}
                                              h_{11} & 0          & 0\\
                                              0          & h_{22} & 0 \\
                                              0          & 0           & h_{33}
                                              \end{pmatrix}
\end{align}
\ese
All the three available GCTs can be used to bring the frame metric to a diagonal form; the degrees of freedom left are just the three eigenvalues of the matrix. The generators of those infinitesimal GCTs define the $\mathfrak{so}(3)$ Lie algebra. There is no further KVF, since the integrability conditions imply that $F_{AB}=0$.

\vspace{0.3cm}
At this point, it should be noted that all the results found thus far are in full agreement with those concerning the corresponding generic cases, in reference \cite{Szafron}. This coincidence on the results obtained via different approaches exhibits the correctness of the method.

\subsection{Application: special solutions to the EFEs}

As it is mentioned in the Introduction, in many instances symmetry can be considered as an exact assumption towards a simplification of the problem under consideration. Of course, the EFEs ---as a system of entangled PDEs of the 2nd order--- constitute a prominent example where this practice is applied. Thus, various prototypes characterised by some symmetry (of either higher order, like locally homogeneous space(times), or lower order, like space times with axial symmetry, etc.) emerge, offering a useful insight into some basic features of both the mathematical structure and the physical system described by the theory. 

In mathematical cosmology, spatial homogeneity is ---in most studies--- a sine qua non approximation to reality, since the adopted symmetry is not of higher order (thus the corresponding theory is not that much artificial), yet this order is enough for a simplification of the initial system.
\newpage
\noindent
For the development, the following would be useful:
\begin{defn}
Let a structure $(\mathcal{M}\times\mathbb{R},^{(4)}\mathbf{g},G_{3})$ be such that:
\begin{itemize}
\item[$SH_{1}$] $\mathcal{M}$ is a 3 dimensional, (usually) simply connected, Hausdorff and $C^{\infty}$ manifold,
\item[$SH_{2}$] $^{(4)}\mathbf{g}$ is a $C^{m}$ metric tensor field, defined on the entire product space that is a non degenerate, covariant tensor field of order 2, with the property that at each point of 
                           $\mathcal{M}\times\mathbb{R}$ one can choose a frame of 4 real vectors $\{\e_{1},\ldots,\e_{4}\}$, such that $\mathbf{g}(\e_{\alpha},\e_{\beta})=\eta_{\alpha\beta}$ where    
                           $\eta_{\alpha\beta}$ is a (possibly constant) symmetric matrix of Lorentzian signature,
\item[$SH_{3}$] $G_{3}$ is a local Lie group of transformations (associated with a Lie algebra $\mathfrak{g}_{3}$) acting simply transitively on $\mathcal{M}$.
\end{itemize}
Then this structure defines a spatially (and locally) homogeneous space time.
\end{defn}
A standard choice for the first fundamental form, associated with $^{(4)}\mathbf{g}$, is (in a local coordinate system $\{x^{\alpha}\}=\{t,x,y,z\}$):
\be
\begin{aligned}
^{(4)}g & =\eta_{\alpha\beta}\Th^{\alpha}\otimes\Th^{\beta}\\
\eta_{\alpha\beta} & =\begin{pmatrix}
                                   -1 & 0 \\
                                    0 & h^{\text{irred.}}_{AB}(t)
                                   \end{pmatrix}\\
\{\Th^{\alpha}\} & =\{N(t)dt,N^{A}(t)dt+\Si^{A}\}
\end{aligned}
\ee
where each pair $(h^{\text{irred.}}_{AB},\{\Si^{A}\})$ is to be attributed according to the Bianchi Type chosen. It should be noted that, now, there is one outer parameter entering the frame metric $\mathbf{h}^{\text{irred.}}$, which is of a time like character.\\
Regarding the KVFs, representing the action of $G_{3}$ at an infinitesimal level, a common technique is to ``promote'' them ---by construction (or definition) of the spatially homogeneous prototype-- into KVFs of the product space; this is done, at a local level, by adding a zero (temporal) component:
\be
\{\Ksi_{A}=\xi^{a}_{\phantom{1}A}(x^{b})\partial_{a}\}\rightarrow \{^{(4)}\Ksi_{A}=0\partial_{0}+\xi^{a}_{\phantom{1}A}(x^{b})\partial_{a}\}
\ee
The result of this technique is that the prolonged vector fields are KVFs of the metric tensor field $^{(4)}\mathbf{g}$, as well:
\be
\pounds_{^{(4)}\Ksi_{A}}(^{(4)}\mathbf{g})=0
\ee

Of course, this is the right point for one to ask whether such a technique could (or should) be implemented for the further KVFs $\{\Ze_{A}\}$ --if any.\\
The fact that there is a subclass of the Szekeres family in which the induced metric tensor field $\mathbf{g}$, defined on 3 dimensional space like hypersurfaces (which are conformally flat), is invariant under a $G_{6}$ while the metric tensor field $^{(4)}\mathbf{g}$ is not invariant under any isometry group $G_{n}$ (see \cite{Szekeres}) clearly shows that ---in general--- such a technique should not be implemented; at least, not when the goal is to find general solutions to EFEs with the initial symmetry setting.\\
On the other hand, prolonging structures is a very subtle and delicate matter. If one observes those cases which do admit further KVF(s) (i.e., Bianchi Types I, II, III, and V), one will see that both the KVF(s) $\{\Ze_{A}\}$, and the enlarged Lie algebra $\mathfrak{g}_{A+B}$ (i.e., the algebra spanned by the set $\{\Ksi_{A}\}\bigcup\{\Ze_{B}\}$) depend on the components of the irreducible frame metric $\mathbf{h}^{\text{irred.}}$; this is tantamount to the fact that they all be time dependent. Time dependence is an admissible feature when KVFs are concerned, but not when it refers to the (enlarged) Lie algebra. Thus, the only reasonable and minimum requirement is that the structure constants of the enlarged Lie algebra must be valid in the 4 dimensional product  space. The corresponding commutators can be thought of as being systems of PDEs with initial conditions described by the eigenvalues of the irreducible frame metric $\mathbf{h}^{\text{irred.}}$ at a given instant of time. This constraint will have much impact on the form of the metric tensor field $^{(4)}\mathbf{g}$, through the extended symmetry condition:
\be\label{Extended_Symmetry}
\pounds_{^{(4)}\Ze_{A}}(^{(4)}\mathbf{g})=0
\ee
for the aforementioned initial conditions must be preserved on time. Since the irreducible frame metric $h^{\text{irred.}}_{AB}$ (at a point) is a positive definite matrix, it will have three strictly positive eigenvalues (or eigenfunctions at a given instant of time), say $\{\lambda_{1},\lambda_{2},\lambda_{3}\}$. Yet, this is not enough information because in many types the automorphism group is not adequate to diagonalise the matrices under consideration. Thus, in most cases the irreducible frame metric $h^{\text{irred.}}_{AB}$ (at a point) will be fully equivalent to the set  $\{\lambda_{1},\lambda_{2},\lambda_{3}\}\bigcup\{\text{off diagonal components}\}$ (the 2nd set, estimated at a given instant of time), where $\lambda_{i} > 0$.\\
Naturally, two (or more) branches emerge:
\begin{itemize}
\item[$B_{1}$] The first is characterised by the fact that the set $\{\lambda_{1},\lambda_{2},\lambda_{3}\}$ is sufficient; then, without any loss of generality, one can set: 
                         $\lambda_{1}=\lambda_{2}=\lambda_{3}=1$ (i.e., $h^{\text{irred.}}_{AB}=\delta_{AB}$)

\item[$B_{2}$] The other(s) is(are) characterised by the fact that the set $\{\lambda_{1},\lambda_{2},\lambda_{3}\}$ is not sufficient, but rather arbitrary constant values must be assigned to the off diagonal 
                         terms --along with some convenient and positive values for the set $\{\lambda_{1},\lambda_{2},\lambda_{3}\}$.
\end{itemize}
So, in order to prolong the extra KVF(s) a series of steps is needed:
\begin{itemize}
\item[$S_{1}$] The matrix $h^{\text{irred.}}_{AB}$, at a given instant of time, is diagonalised. Thus, the three strictly positive eigenvalues $\{\lambda_{i}\}$ are associated with the diagonal components 
                         $h^{\text{irred.}}_{AA}$, while the off diagonal\footnote{In those Bianchi Types where the frame metric has not been brought to a diagonal form, there is only one off diagonal term.} 
                         components are parametrized by a constant --say $S$.
\item[$S_{2}$] The substitutions $\{h^{\text{irred.}}_{AA}\rightarrow \lambda_{A}, h^{\text{irred.}}_{\text{off diagonal}}\rightarrow S\}$ are applied to the extra KVFs, $\{\Ze_{A}\}$.
\item[$S_{3}$] A proper prolongation for the KVF(s) $\{\Ze_{A}\}$ is considered:
                        \bdm
                        \{\Ze_{A}=\zeta^{a}_{\phantom{1}A}(h^{\text{irred.}}_{AB}(t),x^{b})\partial_{a}\}\rightarrow \{^{(4)}\Ze_{A}=\zeta^{0}_{\phantom{1}A}(t)\partial_{t}
                       +\zeta^{a}_{\phantom{1}A}(\lambda_{i},S,x^{b})\partial_{a}\}
                        \edm
                       and various branches must be discriminated along the lines above.
\end{itemize}
Finally, the demand on constancy upon the structure coefficients determined by all the commutators of the enlarged Lie algebra, together with \eqref{Extended_Symmetry} will result in constraints not only upon the quantities $\zeta^{0}_{\phantom{1}A}(t)$ but also on the metric tensor field $^{(4)}\mathbf{g}$ as well. Of course, even in the case where the extra symmetry can be promoted to the product space, this by no means implies that the resulted metric tensor fields $^{(4)}\mathbf{g}$ will be consistent with the EFEs.

With all these in mind, one can explore the possibility of adopting the full symmetry group admitted by $\mathcal{M}$, as a symmetry for the product space endowed with a metric tensor field satisfying the EFEs for the vacuum (i.e., Ricci flat) case. Indeed, since ---as stated earlier--- out of the nine Bianchi Types only I, II, III and V admit further KVFs, only these four cases have to be considered. Leaving the tedious but straightforward calculations aside, it is found:
\begin{itemize}
\item[] \textbf{Bianchi Type I}:    Adoption of $\mathfrak{g}_{6}$ corresponding to \eqref{Type_I_G6} as a symmetry on the product space leads to one branch only, that of  
                                                   $\lambda_{1}=\lambda_{2}=\lambda_{3}=1$. Then, according to the steps described above, the extra KVFs result in trivial subclasses like conformally flat or Minkowski 
                                                   space time --as special members of the Kasner family of solutions \cite{Stephani_et_al.}.
\item[] \textbf{Bianchi Type II}:   Adoption of $\mathfrak{g}_{4}$ corresponding to \eqref{Type_II_G4} as a symmetry on the product space leads to two branches: one for  $S=0$ and one for $S\neq0$. In 
                                                   any case, the result is trivial  leading to either a special member of the Taub family of solutions \cite{Stephani_et_al.} or space times with symmetry higher than the Taub family 
                                                   --but incompatible with the EFEs.
\item[] \textbf{Bianchi Type III}:  Adoption of $\mathfrak{g}_{4}$ corresponding to \eqref{Type_III_G4} as a symmetry on the product space leads to two branches: one for  $S=0$ and one for $S\neq0$. 
                                                   Demanding the ensuing space times to be solutions to the EFEs, the first branch results in special members of the Ellis - MacCallum family, while the second leads to special 
                                                   members of the Kinnersley family. It is very interesting that both special families constitute 2 disjoint classes of solutions with Type III symmetry --see \cite{Daskalos}.  
\item[] \textbf{Bianchi Type V}:   Adoption of $\mathfrak{g}_{6}$ corresponding to \eqref{Type_V_G6} as a symmetry on the product space leads to two branches:  one for  $S=0$ and one for $S\neq0$. 
                                                   In any case, the result is trivial  leading to either a special member of the Joseph family of solutions \cite{Stephani_et_al.} or space times with symmetry higher than the Joseph 
                                                   family --but incompatible with the EFEs.
\end{itemize}


\section{Discussion}

A quite old discovery in the area of special solutions to the EFEs was the spark for inspiration towards a simple question: that of whether the imposition of a symmetry, as a working hypothesis, can induce further symmetry. The framework chosen is not abstract; it is that of differential geometry of locally homogeneous spaces, since these are of interest in many areas of physical theories with a geometrical flavour. Indeed, almost any such physical theory implements symmetry, in many instances, as an exact assumption; analytical cosmological prototypes in gravitation constitute a prominent example.

Both the initial question, adapted to this framework, and the analysis toward its answer, turn out to be quite fruitful. In the present work, the initial problem is divided into two parts: the first part concerns the need for an irreducible form for the system upon which the assumption of symmetry is implemented, while the second part concerns the search for induced symmetries admitted by that (irreducible) system. In the literature (see e.g., \cite{Szafron}) these induced symmetries, when they exist, are called \emph{internal} and have played an important r\^{o}le in the various classification schemes for space times in General Relativity per se. The first part of the problem, i.e., the need for an irreducible form, has led to a Proposition which attributes a geometric nature to the gauge degrees of freedom. From this point of view, this work is closely related to the spirit of \cite{Gauge}. On the other hand, and in contrast to previous work presented in \cite{Szafron,Milnor}, the analysis is quite general and applicable to any number of dimensions. Moreover, the gauge degrees of freedom have been detached, in a way, from the context of classical dynamics and have been given a purely geometrical character.\\
The second part of the problem exhibits the merits and the power of a frame approach to symmetries. Indeed, prompted by \cite{Chinea}, the use of \'E. Cartan's moving frames even when dealing with symmetries rendered the search for these clearer: the initial and induced symmetry are separated in both qualitative and quantitative terms (cf.\ comments on the matrix $F_{AB}$ in the symmetry system, Section 3). Thus, combining the two parts, the initial problem has been solved. 

At the level of applications, only a sample regarding spatially homogeneous space times has been given, since the simplest cases (i.e., when the number of dimensions is 3) either have already been attacked in the literature or have led to trivial cases. Yet, this is done in order not only to exhibit in a manifest way the consistency and the truth of the analysis by fully recovering Szafron's results, but also to provide a
pedantic example --as the 3-dimensional Bianchi types constitute, due to the simplicity of the calculations. Indeed, for the $n=4$ case there are 30 real Lie algebras or ``Bianchi Types'' and such a presentation not only would be very tedious and complicated to follow, but also it would not offer more, compared to the $n=3$ case, regarding the comprehension. Besides, the paradigm of the Bianchi Type III justifies the cause: in a concise and uniform way, two very special and completely disjoint classes of results naturally emerged. So, even if the particular example is old enough, this special feature revitalises the interest in the method. 

Worth mentioning is the fact that in those Bianchi Types where extra KVF(s) exist, two very special features manifest themselves:
\begin{itemize}
\item The totality of the KVFs, leading to non trivial cases upon embedding into a 4-dimensional Lorentzian space, forms a $\mathfrak{g}_{4}$ Lie algebra (corresponding to a $G_{4}$ group), implying 
        \emph{locally rotationally symmetric space times} (LRS) (see, e.g., \cite{MacCallumLRS}). By no means, this fact should be interpreted as if LRS were exhausting the extra symmetry, when existing, for 
         this phenomenon is an accidental feature taking place only because the number of (spatial) dimensions is 3. Therefore, when  $n> 3$ richer groups may be found, which obviously will include (generalised) 
         LRS as special sub cases.
\item The extra KVFs present a smooth behaviour --something which makes feasible the hope of extending the present analysis in order to include global/topological considerations.
\end{itemize}
An extended analysis based in this very last feature, as well as applications to higher dimensions or in cases where the degree of symmetry is lower, might be the objective of a future work. 


\newpage
\begin{ack}
The authors are much indebted to Associate Professor T. Christodoulakis not only for introducing them to the key observation (i.e., the existence of an extra symmetry admitted by the Bianchi Type III prototype --see \cite{Daskalos}) and thus initiating the entire problematic, but also for enlightening discussions during the preparation of this work.
\end{ack}

\begin{note}
All the results presented in the paper involve extensive (especially, for some Bianchi prototypes) calculations. These results have been checked using the Mathematica computer algebra system, along with some packages on Riemannian Geometry and Exterior Differential Systems ---written by Dr.\ S. Bonanos--- which are available at: http://www.inp.demokritos.gr/$\sim$sbonano/.
\end{note}


\end{document}